\documentclass[12pt]{article}
\usepackage[a4paper,mag=1000,includefoot,left=2cm,right=2cm,top=2cm,bottom=2cm,headsep=1cm,footskip=1cm]{geometry}
\usepackage{amsfonts,amsmath,amssymb,amsthm}
\usepackage{bm}
\usepackage{cite}
\def\ve{\varepsilon}

\def\urho{\underline{\rho}}
\def\leq{\leqslant}
\def\geq{\geqslant}
\def\onetwelve{{\textstyle\frac{1}{12}}}
\def\half{{\textstyle\frac12}}
%
\def\*#1{\mathbf{#1}}
\def\nn{\nonumber}
\newtheorem{theorem}{Theorem}
\newtheorem{lemma}{Lemma}
\newtheorem{remark}{Remark}

\begin{document}
\noindent
{\large\textbf{On properties of an explicit in time fourth-order vector compact\\[2mm]
 scheme for the multidimensional wave equation}}

\bigskip\par\noindent {Alexander Zlotnik$^{a,b}$
\footnote{\small E-mail address: \text{azlotnik@hse.ru}}

\bigskip
\par\noindent $^a${\small Higher School of Economics University, Pokrovskii bd. 11, 109028 Moscow, Russia}
\par\noindent $^b${\small Keldysh Institute of Applied Mathematics, Miusskaya sqr., 4, 125047 Moscow, Russia}
}

\smallskip\par\noindent
\renewcommand{\abstract}{\normalsize ABSTRACT\\[1mm]}
\smallskip\par\noindent
\begin{abstract}
\noindent An initial-boundary value problem for the $n$-dimensional wave equation is considered.
A three-level explicit in time and conditionally stable 4th-order compact scheme constructed recently for $n=2$ and the square mesh is generalized to the case of any $n\geq 1$ and the rectangular uniform mesh.
Another approach to approximate the solution at the first time level
(not exploiting high-order derivatives of the initial functions) is suggested.
New stability bounds in the mesh energy norms and the discrete energy conservation laws are given,
and the 4th order error bound is rigorously proved.
Generalizations to the cases of the non-uniform meshes in space and time as well as of the wave equation with variable coefficients are suggested.
\end{abstract}

\medskip\par\noindent KEYWORDS: wave equation, explicit vector scheme, higher-order compact scheme, stability, error bound

\medskip\par\noindent 2010 MATHEMATICS SUBJECT CLASSIFICATIONS: 65M06; 65M12; 65M15

\section{\large Introduction}
\label{s:intro}

The higher-order compact finite-difference schemes form an important class of numerical methods to solve PDEs due to their much greater computational efficiency compared to more standard 2nd order schemes.
In the last decade, the construction and study of higher-order compact schemes for initial-boundary value problems for the 2D and 3D wave or telegraph equations, including the case of variable coefficients, have attracted great interest.
Such compact schemes are implicit and most often conditionally stable.
Among them, the first type of schemes is constituted by implicit schemes which require application of FFT (for constant coefficients) or iterative methods (for variable coefficients) for their efficient implementation, see, in particular, \cite{BTT18,HLZ19,STT19,ZK20,ZChNA21}, where additional references are contained.

\par The second type of compact schemes belongs to the alternating direction implicit (ADI)-like methods (for example, see \cite{M90,S77}) which implementation is direct, reduces to solving collections of 1D linear algebraic tridiagonal systems and is much faster.
The three-level in time compact ADI schemes of several types with the approximation order $\mathcal{O}(h_t^2+|h|^4)$ were suggested for the 2D wave or telegraph equations with constant coefficients, in particular, in \cite{K10,LS13,LT10}; for the 3D and multidimensional cases, see also \cite{K10,MG13}.
Here $h_t$ and $h$ are mesh steps in time and space.
More important compact ADI schemes with the 4th approximation order, i.e. $\mathcal{O}(h_t^4+|h|^4)$, for such 2D and 3D equations were constructed and studied, in particular, in \cite{DZ13,DZ13b,Liao14,ZZLC10,ZK20}.
For the variable coefficient 2D wave equation, the compact ADI scheme having the approximation order $\mathcal{O}(h_t^2+|h|^6)$ has recently been constructed in \cite{CHP19}, and
the 4th order compact ADI-like schemes were proposed and studied in the 2D and 3D cases in \cite{LLL19,LYDH18,ZChAML21}.

\par In the recent paper \cite{JG20}, a new third type of compact schemes has been suggested for the 2D wave equation in a square, with the non-homogeneous Dirichlet boundary conditions.
The scheme is three-level explicit in time and conditionally stable; the uniform mesh in time and the square spatial mesh are taken.
Its construction begins with applying the usual three-point 1D Numerov scheme in time but then, importantly, not only the solution to the wave equation is approximated as usual but its second spatial derivatives are also approximated independently as auxiliary sought functions.
For these auxiliary functions, the three-point 1D Numerov schemes in the corresponding space directions are used.
Thus the scheme is not completely explicit (as the classical explicit 2nd order scheme) since the auxiliary functions
need to be computed by solving collections of 1D linear algebraic systems with tridiagonal matrices.
In this respect, the scheme implementation and its cost are similar to the ADI methods,
but the difference is that the auxiliary functions are completely independent and can be computed in parallel.
Theoretically, only the spectral stability analysis on harmonics was accomplished in the case of zero free terms in the equations of the scheme, but stability bounds and error bounds were not given.
The successful results of numerical experiments were presented.
Note that other two-level vector scheme with the first time derivative as the additional sought function are well-known, for example, see \cite{BB79,HLZ19,Z94,ZCh20}.

\par In this paper, the scheme from \cite{JG20} is generalized to the case of the $n$-dimensional wave equation, $n\geq 1$, and the uniform rectangular mesh (recall that the square mesh cannot be constructed in any rectangular domain), and the scheme is classified as a vector one more systematically.
The new explicit two-level vector equations for the approximate solution at the first time level are constructed too.
They are similar to the main equations and exploit only second order finite-differences of the initial functions.
Notice that we do not use derivatives of the free term and initial functions in contrast to \cite{JG20} that is essential for applications of the scheme in the non-smooth case, for example, see \cite{HLZ19,LLL19,ZK20,ZChNA21}.
The scheme is new even in the simplest 1D case.

\par Next, new results on the conditional stability of the scheme in the mesh energy norm and the discrete energy conservation law
are derived from the corresponding general results in \cite{ZK20,ZCh20} (proved by the energy method) by excluding the auxiliary sought functions.
The case of general free terms in the main and auxiliary equations of the scheme is treated that is essential for the completeness of the stability analysis.
The stability bound and the energy law contain the operators $s_{kN}^{-1}$ inverse to the Numerov averages in $k$th space direction, $1\leq k\leq n$.
The imposed stability conditions are shown to be very close to those arising in the spectral analysis, and the sufficient conditions on their validity in the standard terms of the ratios of the time and spatial steps are given.
Alternative forms of the stability bound and the energy conservation law not exploiting the operators $s_{kN}^{-1}$ are presented too.
The 4th order error bound in the mesh energy norm is also rigorously proved for the first time based on the derived stability theorem.

\par Finally, the generalizations of the scheme to the cases of the non-uniform rectangular mesh in space and the non-uniform mesh in time are presented following \cite{ZK20},
and the important case of the wave equation with variable coefficients is covered too.
For general non-uniform meshes, the scheme has the 3th approximation order as other compact schemes.
The non-uniform mesh in time is important to accomplish a dynamic choice of the time step.
The scheme becomes non-local in time in this case but it still remains explicit in time and simply implemented.
In the case of variable coefficients, the algorithm of the scheme implementation and its cost do not change, and the cost is similar to the ADI methods once again.

\par The paper is organized as follows.
Section \ref{general3level} is devoted to the construction of the scheme.
In Section \ref{stab and error bounds}, the main theorems
are proved.
Section \ref{some gener} deals with the generalizations of the scheme.

\section{\large Construction of an explicit in time 4th-order compact vector scheme for the $n$-dimensional wave equation}
\label{general3level}
\setcounter{equation}{0}
\setcounter{lemma}{0}
\setcounter{theorem}{0}
\setcounter{remark}{0}

We deal with the following initial-boundary value problem (IBVP) for the $n$-dimensional wave equation under the nonhomogeneous Dirichlet boundary condition
\begin{gather}
 \partial_t^2u(x,t)-Lu(x,t)=f(x,t),\ \ L:=a_1^2\partial_1^2+\ldots+a_n^2\partial_n^2,\ \ \text{in}\ \ Q_T=\Omega\times (0,T);
\label{hyperb2eq}\\[1mm]
 u|_{\Gamma_T}=g(x,t);\ \ u|_{t=0}=u_0(x),\ \ \partial_tu|_{t=0}=u_1(x),\ \ x\in\Omega=(0,X_1)\times\ldots\times (0,X_n).
\label{hyperb2ibc}
\end{gather}
Here $a_1>0,\ldots,a_n>0$ are constants (we take them different for uniformity with \cite{ZK20,ZChNA21}), $x=(x_1,\ldots,x_n)$, $\Omega=(0,X_1)\times\ldots\times(0,X_n)$, $n\geq 1$.
Also $\partial\Omega$ is the boundary of $\Omega$ and $\Gamma_T=\partial\Omega\times (0,T)$ is the lateral surface of $Q_T$.

\par We first rewrite formally the wave equation \eqref{hyperb2eq} as a system of equations, with a unique second order partial derivative in $t$ or $x_k$ in each of them:
\begin{gather}
 \partial_t^2u(x,t)-(a_1^2u_{11}(x,t)+\ldots+a_n^2u_{nn}(x,t)) =f(x,t)\ \ \text{in}\ \ Q_T,
\label{hyperb2eq u}\\[1mm]
 u_{kk}(x,t):=\partial_k^2u(x,t),\ \ 1\leq k\leq n,\ \ \text{in}\ \ Q_T,
\label{hyperb2eq ukk}
\end{gather}
where $u_{11},\ldots,u_{nn}$ are the auxiliary sought functions.
We consider smooth in $\bar{Q}_T$ solutions to the IBVP \eqref{hyperb2eq}-\eqref{hyperb2ibc}, then \eqref{hyperb2eq u} implies
\begin{gather}
 \partial_t^4u=\partial_t^2(Lu+f)=L\partial_t^2u+\partial_t^2f=L(a_1^2u_{11}+\ldots+a_n^2u_{nn}+f)+\partial_t^2f.
\label{pt2 hyperb2eq}
\end{gather}

\par Define the uniform mesh $\overline\omega_{h_t}=\{t_m=mh_t\}_{m=0}^M$ on a segment $[0,T]$, with the step $h_t=T/M>0$ and
$M\geq 2$.
Let $\omega_{h_t}=\{t_m\}_{m=1}^{M-1}$ be the internal part of $\overline\omega_{h_t}$.
Introduce the mesh averages (including the Numerov one) and difference operators in $t$
\begin{gather*}
 \bar{s}_ty=\half(\check{y}+y),\ \
 s_ty=\half(y+\hat{y}),\ \
 s_{tN}y=\tfrac{1}{12}(\check{y}+10y+\hat{y}),
\\[1mm]
 \bar{\delta}_ty=\frac{y-\check{y}}{h_t},\,\
 \delta_ty=\frac{\hat{y}-y}{h_t},\,\
 \mathring{\delta}_ty=\frac{\hat{y}-\check{y}}{2h_t},\
 \Lambda_ty=\delta_t\bar{\delta}_ty=\frac{\hat{y}-2y+\check{y}}{h_t^2}
\end{gather*}
with $y^m=y(t_m)$, $\check{y}^{m}=y^{m-1}$ and $\hat{y}^{m}=y^{m+1}$, as well as the operator of summation with the variable upper limit
\[
 I_{h_t}^my=h_t\sum_{l=1}^m y^l\ \ \text{for}\ \ 1\leq m\leq M,\ \ I_{h_t}^0y=0.
\]

\par Due to Taylor's formula in $t$, the wave equation \eqref{hyperb2eq} and formula \eqref{pt2 hyperb2eq}, we get
\begin{gather}
 \Lambda_tu=\partial_t^2u+\tfrac{1}{12}h_t^2\partial_t^4u+\mathcal{O}(h_t^4)
 =a_1^2u_{11}+\ldots+a_n^2u_{nn}+\tfrac{1}{12}h_t^2L(a_1^2u_{11}+\ldots+a_n^2u_{nn})
\nn\\[1mm]
 +f+\tfrac{1}{12}h_t^2(L+\Lambda_t)f+\mathcal{O}(h_t^4)
\ \ \text{on}\ \ \omega_{h_t},
\label{hyperb2eq approx in t}
\end{gather}
where we prefer to use $\Lambda_tf$ instead of $\partial_t^2f$ in \cite{JG20}.

\par Define the uniform mesh $\bar{\omega}_{hk}=\{x_{kl}=lh_k,0\leq l\leq N_k\}$ in $x_k$ with the step $h_k=\frac{X_k}{N_k}$.
Let $\omega_{hk}=\{x_{kl}, 1\leq l\leq N_k-1\}$ be its internal part.
Introduce the standard difference approximation to $\partial_k^2w$ and the Numerov average in $x_k$:
\begin{gather}
 (\Lambda_kw)_l:=\tfrac{1}{h_k^2}(w_{l+1}-2w_l+w_{\l-1}),\ \
 s_{kN}w_l:=\tfrac{1}{12}(w_{l-1}+10w_l+w_{l+1})=(I+\onetwelve h_k^2\Lambda_k)w_l
\label{basic oper}
\end{gather}
on $\omega_{h_k}$, where $w_l=w(x_{kl})$.

\par Define the uniform rectangular mesh $\bar{\omega}_h=\bar{\omega}_{h1}\times\ldots\bar{\omega}_{hk}$ in $\bar{\Omega}$
with $h=(h_1,\ldots,h_n)$.
Let $\omega_h={\omega}_{h1}\times\ldots{\omega}_{hk}$ and $\partial\omega_h=\bar{\omega}_h\backslash\omega_h$ be the internal part and boundary of $\bar{\omega}_h$.
Define also the meshes $\omega_{\*h}:=\omega_h\times\omega_{h_t}$ in $Q_T$ and $\partial\omega_{\*h}=\partial\omega_h\times\{t_m\}_{m=1}^M$ on $\bar{\Gamma}_T$, where $\*h=(h,h_t)$.

\par Let $H_h$ be the Euclidean space of functions defined on $\bar{\omega}_h$ and equal 0 on $\partial\omega_h$ and endowed with the inner product
\[
 (v,w)_h=h_1\ldots h_n\sum\nolimits_{x_{\*k}\in\omega_h}v(x_{\*k})w(x_{\*k}),\ \ \*k=(k_1,\ldots,k_n).
\]
Any operator $C_h=C_h^*>0$ in $H_h$ generates the norm $\|w\|_{C_h}=(C_hw,w)_h^{1/2}$ in $H_h$.
Recall that
\begin{gather}
 0<-\Lambda_k\leq\lambda_{\max k} I\ \ \text{with}\ \ \lambda_{\max k}=\tfrac{4}{h_k^2}\sin\tfrac{\pi(N_k-1)}{2N_k}<\tfrac{4}{h_k^2},\ \
 \tfrac23 I<s_{kN}<I\ \ \text{in}\ \ H_h.
\label{Lak sNk}
\end{gather}
\par Let $L_h$ be a difference operator such that $Lw-L_hw=\mathcal{O}(|h|^2)$ on $\omega_h$ for sufficiently smooth $w$ in $\bar{\Omega}$. Various $L_h$ can be used but below we confine ourselves by the simplest operator
\begin{gather}
 L_h:=a_1^2\Lambda_1+\ldots+a_n^2\Lambda_n\ \ \text{such that}\ \
 0<-L_h<4\big(\tfrac{a_1^2}{h_1^2}+\ldots+\tfrac{a_n^2}{h_n^2}\big)I\ \ \text{in}\ \ H_h.
\label{L_h}
\end{gather}
Then from expansion \eqref{hyperb2eq approx in t} we immediately get
\begin{gather}
 \Lambda_tu=\big(I+\tfrac{1}{12}h_t^2L_h\big)(a_1^2u_{11}+\ldots+a_n^2u_{nn})+f_{\*h}+\mathcal{O}(|\*h|^4)
 \,\ \text{on}\,\ \omega_{\*h},
\label{appr error 1}
\end{gather}
where $I$ is the identity operator and
\[
 f_{\*h}:=f+\tfrac{1}{12}h_t^2(L_h+\Lambda_t)f=(s_{tN}+\tfrac{1}{12}h_t^2L_h)f.
\]
Using the Numerov approximation in $x_k$ for equation \eqref{hyperb2eq ukk} treated as an ordinary differential equation (ODE)
in $x_k$, we have
\begin{gather}
 s_{kN}u_{kk}-\Lambda_ku=\mathcal{O}(h_k^4)\ \ \text{on}\ \ \omega_{\*h},\ \ 1\leq k\leq n.
\label{appr error 2}
\end{gather}

\par Omitting the residual terms in the last two expansions \eqref{appr error 1}-\eqref{appr error 2}, we pass to \textit{the three-level explicit in time vector compact scheme} for the wave equation
\begin{gather}
 \Lambda_tv=\big(I+\tfrac{1}{12}h_t^2L_h\big)(a_1^2v_{11}+\ldots+a_n^2v_{nn})+f_{\*h}
 \ \ \text{on}\ \ \omega_{\*h},
\label{fds 1}\\[1mm]
 s_{kN}v_{kk}=\Lambda_kv\ \ \text{on}\ \ \omega_{\*h},\ \ 1\leq k\leq n.
\label{fds 2}
\end{gather}
Here the sought functions $v\approx u$ and $v_{11}\approx u_{11},\ldots,v_{nn}\approx u_{nn}$ are defined on $\bar{\omega}_h\times\overline\omega_{h_t}$ and $\bar{\omega}_h\times\omega_{h_t}$, respectively.
We can set the discrete boundary conditions
\begin{gather}
 v|_{\partial\omega_{\*h}}=g,\ \
 a_k^2v_{kk}|_{\partial\omega_{\*h}}=g_k,\ \ 1\leq k\leq n,
\label{fds bc}
\end{gather}
where in accordance with the wave equation in $\bar{Q}_T$ and the boundary condition $u|_{\Gamma_T}=g$ one can calculate $\partial_k^2u$ on $\Gamma_T$ and thus set
\[
 g_k:=\partial_t^2g-\sum_{1\leq l\leq n,\,l\neq k}a_l^2\partial_l^2g-f\,\ \text{for}\,\ x_k=0,X_k,\ \
 g_k:=a_k^2\partial_k^2g\,\ \text{for}\,\ x_l=0,X_l,\,\ 1\leq l\leq n,\ l\neq k,
\]
on $\Gamma_T$.
Recall that equation \eqref{fds 2} is the usual Numerov scheme for the ODE \eqref{hyperb2eq ukk} in $x_k$.

\par The constructed scheme is explicit in time.
The reason is that $\hat{v}$ is found explicitly on $\omega_h$ from \eqref{fds 1}, since $\Lambda_tv=d$ means that $\hat{v}=2v-\check{v}+h_t^2d$, provided that the functions
$v_{11},\ldots,v_{nn}$ are known on $\bar{\omega}_h$.
The scheme is not completely explicit since $v_{11},\ldots,v_{nn}$
are (easily) computed from 1D difference equations \eqref{fds 2} with the boundary conditions from \eqref{fds bc}
by solving the collections of corresponding linear algebraic systems with tridiagonal matrices.
In this respect, the implementation is similar to the ADI methods.
But the difference is that the functions $v_{11}^m,\ldots,v_{nn}^m$ are independent and can be computed not sequentially but \textit{in parallel} when $v^m$ is known.
Moreover, there is no need to store all of them simultaneously since only their weighted sum is used to compute $v^{m+1}$, for $m=0,\ldots,M-1$.
\par We need also to define $v^1$ at the first time level $t=h_t$ with the 4th order of accuracy.
This can be done explicitly by using Taylor's formula and the wave equation, for example, see \cite{BTT18,DZ13b,JG20,STT19}.
But these formulas involve higher-order derivatives (or differences) of the initial functions $u_0$ and $u_1$ that is inconvenient in the non-smooth case.
We prefer to avoid this and proceed alternatively following \cite{ZK20,Z94} and construct the explicit equation for $v^1$ in the form close to \eqref{fds 1}-\eqref{fds 2}:
\begin{gather}
 (\delta_tv)^0=\tfrac12 h_t\big(I+\tfrac{1}{12}h_t^2L_h\big)\big(a_1^2v_{11}^0+\ldots+a_n^2v_{nn}^0\big)+u_{1\*h}+\tfrac{1}{2}h_tf_{\*h}^0
  \ \ \text{on}\ \ \omega_h,
\label{fds ic2}\\[1mm]
 s_{kN}v_{kk}^0=\Lambda_kv^0\ \ \text{on}\ \ \omega_h,\ \ 1\leq k\leq n,
\label{fds ic2 vkk}
\end{gather}
with suitable $u_{1\*h}\approx u_1$ and $f_{\*h}^0\approx f|_{t=0}$.
The values of $v_{kk}^0$ on $\partial\omega_h$ can be taken as in \eqref{fds bc} (or according to the values of $\partial_k^2u_0$ on $\partial\Omega$).
See additional arguments in favor of these equations for $v^1$ after the proof of Theorem \ref{theo:error bound} below.
\begin{lemma}
\label{lem1}
For the given functions
\begin{gather}
 u_{1\*h}=\big(I+\tfrac16 h_t^2L_h\big)u_1\ \ \text{on}\ \ \omega_h,
\label{u1N}\\[1mm]
 f_{\*h}^0=f_{dh_t}^{(0)}+\onetwelve h_t^2L_hf^0,\ \
 f_{dh_t}^{(0)}=f_{d}^{(0)}+\mathcal{O}(h_t^3)\ \ \text{on}\ \ \omega_h,
\label{tf0N}
\end{gather}
where $f_{d}^{(0)}:=f_0+\tfrac13 h_t(\partial_tf)_0+\onetwelve h_t^2(\partial_t^2f)_0$
with $y_0:=y|_{t=0}$ for any $y$ (see explicit forms of $f_{dh_t}^{(0)}$ in Remark \ref{rem:fdht0} below),
the following estimate for the approximation error of equation \eqref{fds ic2} holds
\begin{gather}
 \psi^0:=(\delta_tu)^0-\tfrac12 h_t\big(I+\tfrac{1}{12}h_t^2L_h\big)(a_1^2u_{11\,0}+\ldots+a_n^2u_{nn\,0})-u_{1\*h}-\tfrac{1}{2}h_tf_{\*h}^0=\mathcal{O}(|\*h|^4)
\label{appr error 3}
\end{gather}
on $\omega_h$.
\end{lemma}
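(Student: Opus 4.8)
The plan is to expand the exact quantities in $\psi^0$ by Taylor's formula in $t$ about $t=0$ and then to verify that the coefficients prescribed in \eqref{u1N}--\eqref{tf0N} are chosen precisely so that all terms cancel up to residuals in which the differential operator $L$ is replaced by $L_h$; such residuals will turn out to be $\mathcal{O}(|\*h|^4)$ by the consistency $Lw-L_hw=\mathcal{O}(|h|^2)$ together with an elementary inequality.

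First I would expand
\[
 (\delta_tu)^0=\frac{u^1-u^0}{h_t}=\partial_tu|_0+\tfrac12 h_t\partial_t^2u|_0+\tfrac16 h_t^2\partial_t^3u|_0+\tfrac{1}{24}h_t^3\partial_t^4u|_0+\mathcal{O}(h_t^4)
\]
and evaluate the time derivatives at $t=0$ using the initial data, the wave equation \eqref{hyperb2eq}, and its consequence \eqref{pt2 hyperb2eq}. Writing $U:=a_1^2u_{11\,0}+\ldots+a_n^2u_{nn\,0}=Lu_0$, this yields $\partial_tu|_0=u_1$, $\partial_t^2u|_0=U+f_0$, $\partial_t^3u|_0=Lu_1+(\partial_tf)_0$, and $\partial_t^4u|_0=L(U+f_0)+(\partial_t^2f)_0$.

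Next I would substitute \eqref{u1N}--\eqref{tf0N} into the three subtracted groups of $\psi^0$, expanding $f_{dh_t}^{(0)}=f_0+\tfrac13 h_t(\partial_tf)_0+\tfrac{1}{12}h_t^2(\partial_t^2f)_0+\mathcal{O}(h_t^3)$ so that $\tfrac12 h_tf_{\*h}^0$ reproduces exactly the coefficients $\tfrac12 h_tf_0$, $\tfrac16 h_t^2(\partial_tf)_0$ and $\tfrac{1}{24}h_t^3(\partial_t^2f)_0$ carried by the $f$-part of the expansion, while the factor $\tfrac16$ in $u_{1\*h}$ matches the coefficient of $Lu_1$. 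Collecting terms, every cancellation is exact apart from the substitution $L\to L_h$, leaving
\[
 \psi^0=\tfrac16 h_t^2(L-L_h)u_1+\tfrac{1}{24}h_t^3(L-L_h)(U+f_0)+\mathcal{O}(h_t^4)\ \ \text{on}\ \ \omega_h.
\]

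Finally I would invoke $Lw-L_hw=\mathcal{O}(|h|^2)$ for smooth $w$ to bound each surviving term by $\mathcal{O}(h_t^2|h|^2)$, and apply $h_t^2|h|^2\leq\tfrac12(h_t^4+|h|^4)\leq|\*h|^4$ to conclude $\psi^0=\mathcal{O}(|\*h|^4)$. The step demanding the most care is the bookkeeping of the $f$-terms: one must check that the coefficients $\tfrac13$ and $\tfrac{1}{12}$ in $f_d^{(0)}$, together with the extra summand $\tfrac{1}{12}h_t^2L_hf^0$ in $f_{\*h}^0$, are exactly those needed for the $f_0$, $(\partial_tf)_0$, $(\partial_t^2f)_0$ and $Lf_0$ contributions of the Taylor expansion to cancel modulo $(L-L_h)$; this matching is the design principle behind definition \eqref{tf0N}.
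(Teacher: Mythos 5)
Your proof is correct and follows essentially the same route as the paper's: Taylor expansion of $(\delta_tu)^0$ at $t=0$, evaluation of the time derivatives via the wave equation and \eqref{pt2 hyperb2eq}, and coefficient matching against \eqref{u1N}--\eqref{tf0N}. The only (harmless) difference is that you explicitly isolate the surviving $(L-L_h)$ residuals and bound them via $h_t^2|h|^2\leq\tfrac12(h_t^4+|h|^4)$, whereas the paper absorbs them directly into the $\mathcal{O}(|\*h|^4)$ remainder.
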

\begin{proof}
Using Taylor's formula at $t=0$, the initial conditions \eqref{hyperb2ibc}, the wave equation and formula \eqref{pt2 hyperb2eq}, we get
\[
 (\delta_tu)^0=u_1+\tfrac12 h_t(Lu_0+f_0)
 +\tfrac16 h_t^2[Lu_1+(\partial_tf)_0]
 +\tfrac{1}{24}h_t^3[L(Lu_0+f_0)+(\partial_t^2f)_0]+\mathcal{O}(h_t^4).
\]
Since $Lu_0=a_1^2u_{11\,0}+\ldots+a_n^2u_{nn\,0}$, we derive
\begin{gather*}
 \psi^0=u_1+\tfrac16h_t^2Lu_1-u_{1\*h}
 +\tfrac12 h_t\big(f_{d}^{(0)}+\tfrac{1}{12}h_t^2Lf_0-f_{\*h}^0\big)
 +\mathcal{O}(h_t^4).
\end{gather*}
For functions \eqref{u1N}-\eqref{tf0N}, this implies formula \eqref{appr error 3}.
\end{proof}
\begin{remark}
If $u_0$ is smooth and known analytically, then equation \eqref{fds ic2} can be simplified as
\begin{gather}
 (\delta_tv)^0=\tfrac12 h_t\big(I+\tfrac{1}{12}h_t^2L_h\big)Lu^0+u_{1\*h}+\tfrac{1}{2}h_tf_{\*h}^0
  \ \ \text{on}\ \ \omega_h,
\label{fds ic2 simple}
\end{gather}
with omitting equations \eqref{fds ic2 vkk}, and formula \eqref{appr error 3} remains valid.
But, otherwise, equation \eqref{fds ic2 simple} cannot be used and equations \eqref{fds ic2}-\eqref{fds ic2 vkk} become preferable and are more general.
\end{remark}
\begin{remark}
\label{rem:fdht0}
Let $0<h_t\leq\bar{h}_t\leq T$.
If $f$ is sufficiently smooth in $t$ in $\bar{Q}_{\bar{h}_t}$ (or $\bar{\Omega}\times [-\bar{h}_t,\bar{h}_t]$), then $f_{dh_t}^{(0)}=f_{d}^{(0)}+\mathcal{O}(h_t^3)$ (see \eqref{tf0N}) for the following three- and two-level approximations
\begin{gather}
{f}_{dh_t}^{(0)}=\tfrac{7}{12}f^0+\half f^1-\onetwelve f^2,\ \
{f}_{dh_t}^{(0)}=\tfrac13f^0+\tfrac23f^{1/2}\ \ \text{with}\ \  f^{1/2}:=f|_{t=h_t/2}
\label{ftd02}
\end{gather}
(or
$f_{dh_t}^{(0)}=f^0+\tfrac13 h_t\mathring{\delta}_tf^0+\onetwelve h_t^2\Lambda_tf^0
 =-\onetwelve f^{-1}+\tfrac56f^0+\tfrac14 f^1$ with $f^{-1}:=f|_{t=-h_t}$).
See \cite[Remark~3]{ZK20}.
These formulas are checked easily applying Taylor's formula at $t=0$.
\end{remark}

\section{\large Stability and error bounds for the explicit 4th-order compact scheme}
\label{stab and error bounds}
\setcounter{equation}{0}
\setcounter{lemma}{0}
\setcounter{theorem}{0}
\setcounter{remark}{0}

To study stability in more detail,
we need to take the inhomogeneous version of equations \eqref{fds 2} and \eqref{fds ic2 vkk}:
\begin{gather}
 s_{kN}v_{kk}^m=\Lambda_kv^m+b_k^m\ \ \text{on}\ \ \omega_h,\ \ 0\leq m\leq M-1,\ \ 1\leq k\leq n,
\label{fds 2m}
\end{gather}
where $b_1,\ldots,b_n$ are given functions.
Note that in practice these functions are never identically zero due to the round-off errors in computations, so their influence on the solution have to be studied.
This is also necessary to derive an error bound below.

\par To state the stability theorem, we introduce the following self-adjoint operator
\begin{gather}
 A_{\*h}:=\big(I+\tfrac{1}{12}h_t^2L_h\big)\big(-a_1^2s_{1N}^{-1}\Lambda_1-\ldots-a_n^2s_{nN}^{-1}\Lambda_n\big)\ \ \text{in}\ \ H_h.
\label{A h}
\end{gather}
We impose the following condition between the steps in time and space
\begin{gather}
 \tfrac{1}{3}h_t^2\big(\tfrac{a_1^2}{h_1^2}+\ldots+\tfrac{a_n^2}{h_n^2}\big)\leq 1-\ve\ \ \text{for some}\ \ 0\leq\ve<1.
\label{stab cond 1}
\end{gather}
The second inequality \eqref{Lak sNk} implies $I<s_{1N}^{-1}<\frac32I$ in $H_h$, and thus inequalities \eqref{L_h} and this condition ensure that
\begin{gather}
 \ve I<I+\tfrac{1}{12}h_t^2L_h<I\ \Rightarrow\
 0\leq -\ve L_h<A_{\*h}<(I-\tfrac{1}{12}h_t^2 L_h)(-\tfrac32 L_h)<-\tfrac32 L_h\ \ \text{in}\ \ H_h.
\label{I p ht2Lh}
\end{gather}

\par We also define the mesh norm $\|y\|_{L_{h_t}^1(H_h)}:=\tfrac14 h_t\|y^0\|_h$.
\begin{theorem}
\label{theo:stab bound}
Let $g=0$ and $f|_{\Gamma_T}=0$.
Then, for scheme \eqref{fds 1}, \eqref{fds bc}-\eqref{fds ic2 vkk} and \eqref{fds 2m}, under the conditions \eqref{stab cond 1} and
\begin{gather}
 \tfrac14 h_t^2A_{\*h}
 \leq (1-\ve_0^2)I\ \ \text{for some}\ \ 0<\ve_0<1,
\label{stab cond 2}
\end{gather}
the following stability bound
\begin{gather}
 \max_{1\leq m\leq M}\big(\ve_0^2\|\bar{\delta}_tv^m\|_h^2+\|\bar{s}_tv^m\|_{A_{\*h}}^2\big)^{1/2}
 \leq \big(\|v^0\|_{A_{\*h}}^2+\ve_0^{-2}\|u_{1\*h}\|_h^2\big)^{1/2}+2\ve_0^{-1}\big\|f_{\*h}+\beta_{\*h}\big\|_{L_{h_t}^1(H_h)}
\label{stab bound}
\end{gather}
and the corresponding discrete energy conservation law
\begin{gather}
 \|\bar{\delta}_tv^m\|_h^2-\tfrac14h_t^2\|\bar{\delta}_tv^m\|_{A_{\*h}}^2
 +\|\bar{s}_tv^m\|_{A_{\*h}}^2
\nonumber\\[1mm]
 = \|\bar{\delta}_tv^1\|_h^2-\tfrac14h_t^2\|\bar{\delta}_tv^1\|_h^2
 +\|\bar{s}_tv^1\|_{A_{\*h}}^2+2I_{h_t}^{m-1}\big(f_{\*h}+\beta_{\*h},\mathring{\delta}_tv\big)_h
\nonumber\\[1mm]
 =\big(A_{\*h}v^0,s_tv^0\big)_h
 +\big(u_1,\delta_tv^0\big)_h
 +\half h_t\big(f_{\*h}^0+\beta_{\*h}^0,\delta_tv^0\big)_h+2I_{h_t}^{m-1}\big(f_{\*h}+\beta_{\*h},\mathring{\delta}_tv\big)_h,
\label{energy law}
\end{gather}
for $1\leq m\leq M$, are valid.
Here
\begin{gather}
\beta_{\*h}:=\big(I+\tfrac{1}{12}h_t^2L_h\big)(a_1^2s_{1N}^{-1}b_1+\ldots+a_n^2s_{nN}^{-1}b_n)\ \ \text{in}\ \ H_h\ \ \text{on}\ \ \{t_m\}_{m=0}^{M-1},
\label{fds 1 sep beta}
\end{gather}
and functions $f_{\*h},b_1,\ldots,b_n$: $\{t_m\}_{m=0}^{M-1}\to H_h$ and $v^0,u_{1\*h}\in H_h$ are any ($f_{\*h}$ and $u_{1\*h}$ are not only those specific defined above).
The first equality \eqref{energy law} holds for any $v^1\in H_h$ (not only defined by equations \eqref{fds ic2}-\eqref{fds ic2 vkk}).
\end{theorem}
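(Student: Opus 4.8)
The plan is to remove the auxiliary functions $v_{11},\ldots,v_{nn}$ and collapse the vector scheme onto a scalar three-level scheme of the standard form, and then run the energy method exactly as in the general theory of \cite{ZK20,ZCh20}. Solving the inhomogeneous relations \eqref{fds 2m} gives $v_{kk}^m=s_{kN}^{-1}(\Lambda_kv^m+b_k^m)$, which is legitimate since $\tfrac23 I<s_{kN}<I$ by \eqref{Lak sNk}. Substituting this into \eqref{fds 1} and using the definitions \eqref{A h} of $A_{\*h}$ and \eqref{fds 1 sep beta} of $\beta_{\*h}$, I obtain
\begin{gather*}
 \Lambda_tv+A_{\*h}v=f_{\*h}+\beta_{\*h}\ \ \text{on}\ \ \omega_{h_t};
\end{gather*}
treating \eqref{fds ic2}, \eqref{fds ic2 vkk} together with \eqref{fds 2m} at $m=0$ in the same way yields the companion initial relation $\delta_tv^0+\half h_tA_{\*h}v^0=u_{1\*h}+\half h_t(f_{\*h}^0+\beta_{\*h}^0)$. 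Before invoking the energy method I would verify that $A_{\*h}=A_{\*h}^*>0$: the one-dimensional operators $\Lambda_1,\ldots,\Lambda_n$ act in distinct directions and hence mutually commute, each $s_{kN}=I+\onetwelve h_k^2\Lambda_k$ is a polynomial in $\Lambda_k$, so $-s_{kN}^{-1}\Lambda_k=s_{kN}^{-1}(-\Lambda_k)>0$ is self-adjoint; moreover $I+\onetwelve h_t^2L_h>0$ by the left part of \eqref{I p ht2Lh} commutes with the whole sum. Thus $A_{\*h}$ is a product of commuting positive self-adjoint operators, and \eqref{I p ht2Lh} records the resulting two-sided bounds.

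For the energy law I would take the $H_h$-inner product of the reduced equation with $\mathring{\delta}_tv^m$. Writing $\Lambda_t=h_t^{-1}(\delta_t-\bar{\delta}_t)$ and $\mathring{\delta}_t=\half(\delta_t+\bar{\delta}_t)$ and using $\delta_tv^m=\bar{\delta}_tv^{m+1}$ gives $(\Lambda_tv^m,\mathring{\delta}_tv^m)_h=\tfrac1{2h_t}(\|\bar{\delta}_tv^{m+1}\|_h^2-\|\bar{\delta}_tv^m\|_h^2)$. For the second term, self-adjointness of $A_{\*h}$ telescopes $(A_{\*h}v^m,\mathring{\delta}_tv^m)_h=\tfrac1{2h_t}(P^m-P^{m-1})$, where $P^m:=(A_{\*h}v^m,v^{m+1})_h=\|\bar{s}_tv^{m+1}\|_{A_{\*h}}^2-\tfrac14h_t^2\|\bar{\delta}_tv^{m+1}\|_{A_{\*h}}^2$. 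Adding the two relations shows that the energy
\begin{gather*}
 \mathcal{E}^m:=\|\bar{\delta}_tv^m\|_h^2-\tfrac14h_t^2\|\bar{\delta}_tv^m\|_{A_{\*h}}^2+\|\bar{s}_tv^m\|_{A_{\*h}}^2
\end{gather*}
satisfies $\mathcal{E}^{m+1}-\mathcal{E}^m=2h_t(f_{\*h}+\beta_{\*h},\mathring{\delta}_tv)^m$; summation over the interior time levels produces the first equality in \eqref{energy law}. Since this derivation uses only the equation on $\omega_{h_t}$ and the telescoping, it is insensitive to how $v^1$ was produced, which is precisely the asserted validity for arbitrary $v^1\in H_h$. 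The second equality then follows by inserting the companion initial relation and $s_tv^0=v^0+\half h_t\delta_tv^0$ into $\mathcal{E}^1=\|\delta_tv^0\|_h^2+P^0$.

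To reach the stability bound \eqref{stab bound} I would first use \eqref{stab cond 2} to bound the energy coercively from below: since $\tfrac14h_t^2A_{\*h}\leq(1-\ve_0^2)I$,
\begin{gather*}
 \mathcal{E}^m\geq\ve_0^2\|\bar{\delta}_tv^m\|_h^2+\|\bar{s}_tv^m\|_{A_{\*h}}^2,
\end{gather*}
which is exactly the square of the quantity maximized on the left of \eqref{stab bound}, and which also yields $\|\bar{\delta}_tv^m\|_h\leq\ve_0^{-1}\sqrt{\mathcal{E}^m}$. Combining this with the difference form of the energy law, the Cauchy--Bunyakovsky inequality applied to $(f_{\*h}+\beta_{\*h},\mathring{\delta}_tv)^m$ with $\mathring{\delta}_tv^m=\half(\bar{\delta}_tv^{m+1}+\bar{\delta}_tv^m)$, and dividing by $\sqrt{\mathcal{E}^{m+1}}+\sqrt{\mathcal{E}^m}$, I would get a monotone increment of the form $\sqrt{\mathcal{E}^{m+1}}-\sqrt{\mathcal{E}^m}\lesssim\ve_0^{-1}h_t\|(f_{\*h}+\beta_{\*h})^m\|_h$; summing this controls $\max_m\mathcal{E}^m$ by $\mathcal{E}^1$ plus the source sum. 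It then remains to estimate $\mathcal{E}^1$ through the initial data, extracting $\|v^0\|_{A_{\*h}}^2+\ve_0^{-2}\|u_{1\*h}\|_h^2$ from the companion initial relation (by completing the square in $\delta_tv^0$) and folding the residual $\half h_t(f_{\*h}^0+\beta_{\*h}^0,\delta_tv^0)_h$ cross term into the $t=0$ contribution of the $L_{h_t}^1(H_h)$ norm.

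I expect two places to be delicate. The first is the reduction itself: one must be certain that inverting each $s_{kN}$ and collapsing the directional sum genuinely reproduces the self-adjoint positive operator $A_{\*h}$ of \eqref{A h}, which rests entirely on the mutual commutativity of the one-dimensional factors; with variable coefficients or non-commuting directional operators this step would fail. The second, more subtle, is the passage from the \emph{exact} energy equality to the \emph{sharp} stability inequality: one has to carry the first-level energy $\mathcal{E}^1$ correctly, perform the discrete square-root (Gronwall-type) step without degrading the coercivity factor $\ve_0$, and account for the source at $t=0$ so that it enters with the precise weight $\tfrac14h_t$ of the $L_{h_t}^1(H_h)$ norm together with the prefactor $2\ve_0^{-1}$, rather than the cruder constants that a naive application of Cauchy--Bunyakovsky and Young's inequalities would produce.
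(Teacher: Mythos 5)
Your proposal is correct and follows essentially the same route as the paper: the key step in both is eliminating the auxiliary functions via $v_{kk}=s_{kN}^{-1}(\Lambda_kv+b_k)$ so that the vector scheme collapses to the standard three-level form with $B_h=I$ and $A_h=A_{\*h}$. The only difference is that the paper then simply invokes the general stability theorem of \cite{ZK20,ZCh20} for that form, whereas you re-derive it by the energy method (telescoping with $\mathring{\delta}_tv$, coercivity from \eqref{stab cond 2}, discrete square-root Gronwall step), which is precisely the argument behind the cited general result.
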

\begin{proof}
1. First, we recall the general three-level scheme
\begin{gather}
 B_h\Lambda_tv+A_hv=\varphi\ \ \text{in}\ \ H_h\ \ \text{on}\ \ \omega_{h_t},
\label{fds 1 gen}\\[1mm]
 B_h(\delta_tv)^0+\tfrac12 h_t A_hv^0=u_1+\tfrac12 h_t\varphi^0\ \ \text{in}\ \ H_h
\label{fds ic2 gen}
\end{gather}
(with the weight $\sigma=0$), in particular, see \cite{Z94,ZCh20,ZChNA21,ZK20}.
Here $B_h=B_h^*>0$ and $A_h=A_h^*>0$ are any operators in $H_h$ related by the inequality
\begin{gather}
 \tfrac14 h_t^2A_h\leq (1-\ve_0^2)B_h\ \ \text{for some}\ \ 0<\ve_0<1.
\label{stab cond 2 gen}
\end{gather}
Then the following stability bound
\begin{gather}
 \max_{1\leq m\leq M}\big(\ve_0^2\|\bar{\delta}_tv^m\|_{B_h}^2+\|\bar{s}_tv^m\|_{A_h}^2\big)^{1/2}
\nn\\[1mm]
 \leq \big(\|v^0\|_{A_h}^2+\ve_0^{-2}\|B_h^{-1/2}u_{1\*h}\|_h^2\big)^{1/2}+2\ve_0^{-1}\big\|B_h^{-1/2}f\big\|_{L_{h_t}^1(H_h)}
\label{gen stab bound}
\end{gather}
and the discrete energy conservation law (that entails the stability bound)
\begin{gather}
 \|\bar{\delta}_tv^m\|_{B_h}^2-\tfrac14h_t^2\|\bar{\delta}_tv^m\|_{A_h}^2
 +\|\bar{s}_tv^m\|_{A_h}^2
\nonumber\\[1mm]
 = \|\bar{\delta}_tv^1\|_{B_h}^2-\tfrac14h_t^2\|\bar{\delta}_tv^1\|_h^2
 +\|\bar{s}_tv^1\|_{A_h}^2+2I_{h_t}^{m-1}\big(\varphi,\mathring{\delta}_tv\big)_h
\nonumber\\[1mm]
 =\big(A_hv^0,s_tv^0\big)_h
 +\big(u_1,\delta_tv^0\big)_h
 +\half h_t\big(\varphi^0,\delta_tv^0\big)_h+2I_{h_t}^{m-1}\big(\varphi,\mathring{\delta}_tv\big)_h,\ \ 1\leq m\leq M,
\label{gen energy law}
\end{gather}
are valid according to \cite[Theorem 1]{ZK20} and the proof of Theorem 1 in \cite{ZCh20} (see also \cite{S77}).
Notice that the first and second equalities \eqref{gen energy law} are valid respectively for any $v^1\in H_h$ and $v^1$ defined by equation \eqref{fds ic2 gen}.
Also inequality \eqref{stab cond 2 gen} ensures that
\[
 \ve_0^2\|\bar{\delta}_tv^m\|_{B_h}^2\leq\|\bar{\delta}_tv^m\|_{B_h}^2-\tfrac14h_t^2\|\bar{\delta}_tv^m\|_{A_h}^2\ \
 \forall v^{m-1},v^m\in H_h.
\]
\par 2. Since $g=0$ and $f|_{\Gamma_T}=0$, clearly
$v$: $\overline\omega_{h_t}\to H_h$ and $v_{kk}$: $\{t_m\}_{m=0}^{M-1}\to H_h$, $1\leq k\leq n$.
From equation \eqref{fds 2m}, we can express $v_{kk}$ through $v$:
\[
 v_{kk}=s_{kN}^{-1}(\Lambda_kv+b_k)\ \ \text{in}\ \ H_h\ \ \text{on}\ \ \{t_m\}_{m=0}^{M-1},\ \ 1\leq k\leq n.
\]
Inserting these formulas to equations \eqref{fds 1} and \eqref{fds ic2}, we get the closed equations for $v$:
\begin{gather}
  \Lambda_tv=\big(I+\tfrac{1}{12}h_t^2L_h\big)\big(a_1^2s_{1N}^{-1}\Lambda_1+\ldots+a_n^2s_{nN}^{-1}\Lambda_n\big)v+f_{\*h}+\beta_{\*h}
  \ \ \text{in}\ \ H_h\ \ \text{on}\ \ \omega_{h_t},
\label{fds 1 sep}\\[1mm]
 (\delta_tv)^0=\tfrac12 h_t\big(I+\tfrac{1}{12}h_t^2L_h\big)\big(a_1^2s_{1N}^{-1}\Lambda_1+\ldots+a_n^2s_{nN}^{-1}\Lambda_n\big)v^0
 +u_{1\*h}+\tfrac{1}{2}h_t(f_{\*h}^0+\beta_{\*h}^0)\ \ \text{in}\ \ H_h,
\label{fds 2 sep}
\end{gather}
with $\beta_{\*h}$ defined in \eqref{fds 1 sep beta}.

\par These equations present a particular case of equations \eqref{fds 1 gen}-\eqref{fds ic2 gen} with the operators
$B_h:=I$ and $A_h:=A_{\*h}$ given in \eqref{A h}.
Then the stability condition \eqref{stab cond 2 gen} takes the form \eqref{stab cond 2}, and
the stability bound \eqref{stab bound} and energy conservation law \eqref{energy law} follow from respective general relations
\eqref{gen stab bound} and \eqref{gen energy law}.
In addition, the following inequality holds 
\[
 \ve_0^2\|\bar{\delta}_tv^m\|_h^2
 \leq \|\bar{\delta}_tv^m\|_h^2-\tfrac14h_t^2\|\bar{\delta}_tv^m\|_{A_{\*h}}^2
\]
that is essential on the left in \eqref{energy law}.
\end{proof}

\par Note that the norms of $f_{\*h}+\beta_{\*h}$ and $u_{1\*h}$ in \eqref{stab bound} are bounded uniformly in $\*h$ by the same norms of $f$, $b_1,\ldots,b_n$ and $u_1$ due to the stability condition \eqref{stab cond 1}.

\par Let us compare the above stability conditions
with the corresponding conditions arising in the frequently used spectral method (it was used in \cite{JG20} as well).
In this method, one can consider the system of eigenvectors of the operator $-L_h$ in $H_h$:
\[
 -L_he_{\ell}=\lambda_{\ell}(-L_h)e_{\ell}\ \text{on}\ \omega_h,\ \ e_{\ell}=\sin\tfrac{\pi l_1 x_1}{X_1}\ldots\sin\tfrac{\pi l_n x_n}{X_n},\ \
 1\leq l_1\leq N_1-1,\ldots,1\leq l_n\leq N_n-1.
\]
with $\ell=(l_1,\ldots,l_n)$.
Hereafter $\lambda_{\ell}(A_h)$ are the corresponding eigenvalues of the operator $A_h$.
Inserting the solutions in the form $v^m=e_{\ell}y^m$, $m\geq 0$ into the homogeneous equation \eqref{fds 1 sep} with $f_{\*h}+\beta_{\*h}=0$, we get the difference equation for $y$:
\[
 \Lambda_ty+\lambda_{\ell}y=0\ \Leftrightarrow\ \hat{y}-(2-h_t^2\lambda_{\ell})y+\check{y}=0\ \ \text{on}\ \ \omega_{h_t},
\]
where
\[
 A_{\*h}e_{\ell}=\lambda_{\ell}e_{\ell}\ \ \text{on}\ \ \omega_h,\ \
 \lambda_{\ell}=\big[1-\onetwelve h_t^2\lambda_{\ell}(-L_h)\big]
 \lambda_\ell\big(-a_1^2s_{1N}^{-1}\Lambda_1-\ldots-a_n^2s_{nN}^{-1}\Lambda_n\big).
\]
The corresponding characteristic equation is
\[
 q^2-(2-h_t^2\lambda_{\ell})q+1=0,
\]
and the inequality $|q_{1,2}|\leq 1$ for its roots is the standard spectral stability condition.
But we prefer to exclude the cases $q_1=q_2=\pm 1$ since they allows $y^m$ to be unbounded in $m$ in contrast to the stability bound \eqref{stab bound}.
This leads to the condition $(2-h_t^2\lambda_{\ell})^2-4<0$ for the discriminant of the quadratic equation, i.e.,
the condition $0<\frac14 h_t^2\lambda_{\ell}<1$ for any $\ell$ that is clearly equivalent to the operator inequalities
\[
 0<I+\tfrac{1}{12}h_t^2L_h,\ \ \tfrac14 h_t^2A_{\*h}<I\ \ \text{in}\ \ H_h.
\]
They are respectively equivalent to the left inequality \eqref{I p ht2Lh} for $\ve=0$ and close to condition \eqref{stab cond 2}.
\par But the last operator inequalities ensure only a weakened form of stability.
Due to the second inequality \eqref{I p ht2Lh},
the norm $\|\cdot\|_{A_{\*h}}$ can be bounded from above by $\|\cdot\|_{-L_h}$ and from below as well provided that $\ve>0$;
so it is desirable that $0<\ve<1$.
Also the case $0<\ve_0<1$ is essential to ensure stability in the full mesh energy norm.
\par The second inequality \eqref{I p ht2Lh} implies that condition \eqref{stab cond 2} follows from the number inequality
\begin{gather}
 \tfrac14 h_t^2(1-\tfrac{1}{12}h_t^2\lambda)\tfrac32\lambda\leq 1-\ve_0^2\ \ \forall\lambda=\lambda_{\ell}(-L_h).
\label{A_h suff}
\end{gather}
This inequality follows from the quadratic inequality
\begin{gather}
 \tfrac32\big(1-\tfrac{p}{3}\big)p\leq 1-\ve_0^2\ \ \forall 0<p\leq h_t^2\big(\tfrac{a_1^2}{h_1^2}+\ldots+\tfrac{a_n^2}{h_n^2}\big).
\label{A_h suff p}
\end{gather}
Solving it, we obtain that both conditions \eqref{stab cond 1} and \eqref{stab cond 2} are valid provided that
\[
 h_t^2\big(\tfrac{a_1^2}{h_1^2}+\ldots+\tfrac{a_n^2}{h_n^2}\big)
 \leq\min\big\{3(1-\ve),\tfrac12\big[3-(1+8\ve_0^2)^{1/2}\big]\big\};
\]
note that here $\tfrac23(1-\ve_0^2)<\tfrac12\big[3-(1+8\ve_0^2)^{1/2}\big]$.
\begin{remark}
Inequality \eqref{A_h suff} is not precise since it used the inequality $s_{kN}^{-1}\leq\tfrac32 I$ in $H_h$, $1\leq k\leq n$.
Involving eigenvalues of $s_{kN}$ leads to the multivariate inequality
\[
 \big[1-\tfrac{1}{3}(p_1\sigma_1+\ldots+p_n\sigma_n)\big]
 \Big(\tfrac{p_1\sigma_1}{1-\frac{1}{3}\sigma_1}+\ldots+\tfrac{p_n\sigma_n}{1-\frac{1}{3}\sigma_n}\Big)\leq 1-\ve_0^2\ \
 \forall\sigma_1,\ldots,\sigma_n\in [0,1],
\]
with $p_1=h_t^2\tfrac{a_1^2}{h_1^2},\ldots,p_n=h_t^2\tfrac{a_n^2}{h_n^2}$, more accurate than \eqref{A_h suff p}.
Its solving is possible but rather cumbersome even for $n=1$, and here we will not come into details (see also \cite {JG20} for $p_1=p_2$, $\ve_0=0$ and $n=2$).
\end{remark}

\par Equations \eqref{fds 1 sep} and \eqref{fds 2 sep 2} that appeared in the last proof together with \eqref{fds 1 sep beta} can be written without inverse operators which is more standard for other compact schemes.
Applying the operator $\bar{s}_N:=s_{1N}\ldots s_{nN}$ to them, we obtain
\begin{gather}
  \bar{s}_N\Lambda_tv+\big(I+\tfrac{1}{12}h_t^2L_h\big)\bar{A}_Nv=\bar{s}_Nf_{\*h}+\bar{s}_N\beta_{\*h}
  \ \ \text{in}\ \ H_h\ \ \text{on}\ \ \omega_{h_t},
\label{fds 1 sep 2}\\[1mm]
 \bar{s}_N(\delta_tv)^0+\tfrac12 h_t\big(I+\tfrac{1}{12}h_t^2L_h\big)\bar{A}_Nv^0
 =\bar{s}_Nu_{1\*h}+\tfrac{1}{2}h_t\big(\bar{s}_Nf_{\*h}^0+\bar{s}_N\beta_{\*h}^0\big)\ \ \text{in}\ \ H_h,
\label{fds 2 sep 2}\\[1mm]
 \bar{s}_N\beta_{\*h}:=\big(I+\tfrac{1}{12}h_t^2L_h\big)(\bar{s}_{\widehat{1}}b_1+\ldots+\bar{s}_{\widehat{n}}b_n)\ \ \text{in}\ \ H_h\ \ \text{on}\ \ \{t_m\}_{m=0}^{M-1}
\label{fds 1 sep beta 2}
\end{gather}
since $L_h$ commutes with $\bar{s}_N$.
Here we use the operators
\[
 \bar{A}_N:=-\big(a_1^2\bar{s}_{N\widehat{1}}\Lambda_1+\ldots+a_n^2\bar{s}_{N\widehat{n}}\Lambda_n\big),\ \
 \bar{s}_{N\widehat{l}}:=\prod_{1\leq k\leq n,\,k\neq l}s_{kN}
\]
such that $\bar{A}_N^*=\bar{A}_N>0$ and $0<\bar{s}_{N\widehat{l}}^{\,*}=\bar{s}_{N\widehat{l}}<I$ in $H_h$, $1\leq l\leq n$.
According to relations \eqref{gen stab bound}-\eqref{gen energy law}, these equations imply other forms of the stability bound
\begin{gather}
 \max_{1\leq m\leq M}\big(\ve_0^2\|\bar{\delta}_tv^m\|_{\bar{s}_N}^2+\|\bar{s}_tv^m\|_{\widetilde{A}_N}^2\big)^{1/2}
\nn\\[1mm]
 \leq \big(\|v^0\|_{\widetilde{A}_N}^2+\ve_0^{-2}\|u_{1\*h}\|_{\bar{s}_N}^2\big)^{1/2}
 +2\ve_0^{-1}\big\|\bar{s}_N^{\,1/2}(f_{\*h}+\beta_{\*h})\big\|_{L_{h_t}^1(H_h)}
\label{stab bound 2}
\end{gather}
and the corresponding discrete energy conservation law
\begin{gather}
 \|\bar{\delta}_tv^m\|_{\bar{s}_N}^2-\tfrac14h_t^2\|\bar{\delta}_tv^m\|_{\widetilde{A}_N}^2
 +\|\bar{s}_tv^m\|_{\widetilde{A}_N}^2
 =\big(\widetilde{A}_Nv^0,s_tv^0\big)_h
 +\big(\bar{s}_Nu_{1\*h},\delta_tv^0\big)_h
\nonumber\\[1mm]
 +\half h_t\big(\bar{s}_N(f_{\*h}^0+\beta_{\*h}^0),\delta_tv^0\big)_h
 +2I_{h_t}^{m-1}\big(\bar{s}_N(f_{\*h}+\beta_{\*h}),\mathring{\delta}_tv\big)_h,
\end{gather}
for $1\leq m\leq M$.
Here the operators $\bar{s}_N$ and $\widetilde{A}_N$ satisfy the inequalities
\[
 (\tfrac23)^nI<\bar{s}_N<I,\ \
 \ve(\tfrac23)^{n-1}(-L_h)<\ve\bar{A}_N<\widetilde{A}_N:=\big(I+\tfrac{1}{12}h_t^2L_h\big)\bar{A}_N<\bar{A}_N<-L_h\ \ \text{in}\ \ H_h.
\]
Consequently, in the stability bound \eqref{stab bound 2}, the norms $\|\cdot\|_{\bar{s}_N}$ and $\|\cdot\|_{\widetilde{A}_N}$ can be bounded from above and below by respectively $\|\cdot\|_h$ and $\|\cdot\|_{\bar{A}_N}$ (or $\|\cdot\|_{-L_h}$) for $0<\ve<1$,
and $\bar{s}_N^{\,1/2}$ can be omitted in front of $f_{\*h}+\beta_{\*h}$.

\par Note that the scheme in the form \eqref{fds 1 sep 2}-\eqref{fds 1 sep beta 2} is not compact since it involves the product $L_h\bar{A}_N$.
The related implicit conditionally stable 4th-order compact  scheme
\begin{gather*}
  \big(\bar{s}_N+\tfrac{1}{12}h_t^2\bar{A}_N\big)\Lambda_tv+\bar{A}_Nv
  =s_Nf+\tfrac{1}{12}h_t^2\Lambda_tf,
  \ \ \text{in}\ \ H_h\ \ \text{on}\ \ \omega_{h_t},
\label{fds A N sep 2}\\[1mm]
 \big(\bar{s}_N+\tfrac{1}{12}h_t^2\bar{A}_N\big)(\delta_tv)^0+\tfrac12 h_t\bar{A}_Nv^0
 =\big(s_N+\tfrac{1}{12}h_t^2L_h\big)u_1+\tfrac{1}{2}h_t\big[f_{dh_t}^{(0)}+(s_N-I)f^0\big]\ \ \text{in}\ \ H_h,
\label{fds A_N sep 2}
\end{gather*}
together with its ADI version (which is not reproduced here) with the same properties have recently been studied in detail in \cite{ZK20}.
Here $s_N:=I+\tfrac{1}{12}h_1^2\Lambda_1f+\ldots+\tfrac{1}{12}h_n^2\Lambda_n$.

\par The next main result concerns the 4th order error bound and is based on Theorem \ref{theo:stab bound}.
Notice that $g$ and $f|_{\Gamma_T}$ can be general (not only zero) in it.
\begin{theorem}
\label{theo:error bound}
Under the stability conditions \eqref{stab cond 1} and \eqref{stab cond 2}, the following 4th order error bound in the mesh energy norm for scheme
\eqref{fds 1}-\eqref{fds ic2 vkk} with $v^0=u^0$ on $\bar{\omega}_h$ holds
\begin{gather}
 \max_{1\leq m\leq M}\big(\ve_0\|\bar{\delta}_t(u-v)^m\|_h+\sqrt{\ve}\,\|\bar{s}_t(u-v)^m\|_{-L_h}\big)=\mathcal{O}(|h|^4).
\label{error bound}
\end{gather}
\end{theorem}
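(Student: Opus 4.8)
The plan is to run the standard ``stability $+$ consistency $\Rightarrow$ convergence'' argument, using Theorem~\ref{theo:stab bound} as the stability ingredient and the consistency estimates \eqref{appr error 1}, \eqref{appr error 2} together with Lemma~\ref{lem1} as the consistency ingredient. First I would introduce the error functions $z:=u-v$ on $\bar\omega_h\times\overline\omega_{h_t}$ and $z_{kk}:=u_{kk}-v_{kk}$, $1\leq k\leq n$, where $u_{kk}=\partial_k^2u$. Since $v^0=u^0$ on $\bar\omega_h$ we have $z^0=0$; since $u|_{\Gamma_T}=g$ and $v|_{\partial\omega_{\*h}}=g$ we have $z|_{\partial\omega_h}=0$, so $z\in H_h$; and because $g_k$ in \eqref{fds bc} is chosen to equal $a_k^2\partial_k^2u$ on $\Gamma_T$, also $z_{kk}|_{\partial\omega_h}=0$, i.e. $z_{kk}\in H_h$. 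Thus the error solves a homogeneous-boundary problem to which Theorem~\ref{theo:stab bound} (whose hypotheses $g=0$, $f|_{\Gamma_T}=0$ are met for $z$) is directly applicable.

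Next I would subtract the scheme \eqref{fds 1}, \eqref{fds 2}, \eqref{fds ic2}, \eqref{fds ic2 vkk} from the exact relations \eqref{appr error 1}, \eqref{appr error 2} and the identity defining $\psi^0$ in Lemma~\ref{lem1}. This shows that $z$ satisfies exactly the structure \eqref{fds 1}, \eqref{fds 2m}, \eqref{fds ic2}, \eqref{fds ic2 vkk} of the scheme, but with the data replaced by the truncation errors: the free term $f_{\*h}$ is replaced by the main residual $\psi:=\Lambda_tu-\big(I+\onetwelve h_t^2L_h\big)(a_1^2u_{11}+\ldots+a_n^2u_{nn})-f_{\*h}$, the terms $b_k$ in \eqref{fds 2m} become $b_k:=s_{kN}u_{kk}-\Lambda_ku$, the role of $u_{1\*h}$ in the first-level equation is played by $\psi^0$, and the initial value is $z^0=0$. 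By \eqref{appr error 1}, \eqref{appr error 2} and \eqref{appr error 3} we have $\psi=\mathcal{O}(|\*h|^4)$, $b_k=\mathcal{O}(h_k^4)$ and $\psi^0=\mathcal{O}(|\*h|^4)$ uniformly on the mesh, provided $u$ (and $f$) is smooth enough in $\bar Q_T$ for the underlying Taylor expansions.

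Finally I would apply the stability bound \eqref{stab bound} to $z$, with $f_{\*h}$ replaced by $\psi$, the $b_k$'s as above, $v^0$ replaced by $z^0=0$ and $u_{1\*h}$ replaced by $\psi^0$; the associated term from \eqref{fds 1 sep beta} is then $\beta_{\*h}=\big(I+\onetwelve h_t^2L_h\big)(a_1^2s_{1N}^{-1}b_1+\ldots+a_n^2s_{nN}^{-1}b_n)$. Since $0<I+\onetwelve h_t^2L_h<I$ and $I<s_{kN}^{-1}<\tfrac32I$ by \eqref{I p ht2Lh} and \eqref{Lak sNk}, every ingredient on the right-hand side of \eqref{stab bound} is $\mathcal{O}(|\*h|^4)$. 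The stability condition \eqref{stab cond 1} forces $h_t\leq C|h|$, whence $|\*h|^4=\mathcal{O}(|h|^4)$, so the whole right-hand side is $\mathcal{O}(|h|^4)$. On the left I would descend from $A_{\*h}$ to $-L_h$ using the lower bound $A_{\*h}>-\ve L_h$ in \eqref{I p ht2Lh}, which yields $\sqrt{\ve}\,\|\bar s_tz^m\|_{-L_h}\leq\|\bar s_tz^m\|_{A_{\*h}}$, and combine the two terms via $a+b\leq\sqrt2\,(a^2+b^2)^{1/2}$ to arrive at exactly \eqref{error bound}. The only genuinely delicate points, rather than real obstacles, are the verification that the error carries homogeneous boundary data for both $z$ and every $z_{kk}$ (so that Theorem~\ref{theo:stab bound} applies and $z^0$, $z_{kk}^0$ feed correctly into $\psi^0$ and $\beta_{\*h}^0$), and the uniform-in-$\*h$ control of the remainders in \eqref{appr error 1}--\eqref{appr error 3}, which is where the required smoothness of the exact solution enters.
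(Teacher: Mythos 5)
Your proposal is correct and follows essentially the same route as the paper: form the error equations for $u-v$ and $u_{kk}-v_{kk}$ with the truncation errors $\psi$, $\psi_{kk}$ (playing the role of the $b_k$ in \eqref{fds 2m}) and $\psi^0$ as data, invoke the stability bound \eqref{stab bound} of Theorem \ref{theo:stab bound}, and descend from $\|\cdot\|_{A_{\*h}}$ to $\sqrt{\ve}\,\|\cdot\|_{-L_h}$ via the second inequality in \eqref{I p ht2Lh}. Your explicit remarks on the homogeneous boundary data for the $z_{kk}$ and on $h_t=\mathcal{O}(|h|)$ converting $\mathcal{O}(|\*h|^4)$ into $\mathcal{O}(|h|^4)$ are points the paper leaves implicit, but they do not change the argument.
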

\begin{proof}
We define the approximation errors of equations \eqref{fds 1}-\eqref{fds 2} as follows
\begin{gather}
 \psi:=\Lambda_tu-\big(I+\tfrac{1}{12}h_t^2L_h\big)(a_1^2u_{11}+\ldots+a_n^2u_{nn})-f_{\*h}
 \ \ \text{on}\ \ \omega_{\*h},
\label{psi order}\\[1mm]
 \psi_{kk}:=s_{kN}u_{kk}-\Lambda_ku
\ \ \text{on}\ \ \omega_{\*h},\ \ 1\leq k\leq n.
\label{psik order}
\end{gather}
Formulas \eqref{appr error 1}-\eqref{appr error 2} together with \eqref{appr error 3} mean that the approximation errors of equations \eqref{fds 1}, \eqref{fds 2} and \eqref{fds ic2}-\eqref{fds ic2 vkk} has the 4th order:
\begin{gather}
 \max_{\omega_{\*h}}|\psi|+\max_{\omega_h}|\psi^0|
 +\max_{0\leq m\leq M-1}\max_{\omega_h}|\big(|\psi_{11}^m|+\ldots+|\psi_{nn}^m|\big)
 =\mathcal{O}(|\*h|^4).
\label{appr error bound}
\end{gather}
\par Due to the equations for $v$, $v_{11},\ldots,v_{nn}$ as well as the definitions of $\psi$, $\psi_{kk}$ and $\psi^0$,
the errors $r:=u-v$, $r_{11}:=u_{11}-v_{11},\ldots,r_{nn}:=u_{nn}-v_{nn}$ satisfy the following equations
\begin{gather*}
 \Lambda_tr-\big(I+\tfrac{1}{12}h_t^2L_h\big)(a_1^2r_{11}+\ldots+a_n^2r_{nn})=\psi
 \ \ \text{on}\ \ \omega_{\*h},
\\[1mm]
s_{kN}r_{kk}-\Lambda_kr=\psi_{kk}\ \ \text{on}\ \ \omega_{\*h},\ \ 1\leq k\leq n,
\\[1mm]
 (\delta_tr)^0-\tfrac12 h_t\big(I+\tfrac{1}{12}h_t^2L_h\big)(a_1^2r_{11}^0+\ldots+a_n^2r_{nn}^0)=\psi^0\ \ \text{on}\ \ \omega_h,
\\[1mm]
 s_{kN}r_{kk}^0-\Lambda_kr^0=\psi_{kk}^0\ \ \text{on}\ \ \omega_h,\ \ 1\leq k\leq n,
\end{gather*}
with the approximation errors on the right, and
\[
 r|_{\partial\omega_{\*h}}=0,\ \ r_{kk}|_{\partial\omega_{\*h}}=0,\ \ 1\leq k\leq n,\ \ r^0=0.
\]
The stability bound \eqref{stab bound} applied to these equations and estimate \eqref{appr error bound} imply the error bound
\begin{gather*}
 \max_{1\leq m\leq M}\big(\ve_0\|\bar{\delta}_t(u-v)^m\|_h+\sqrt{\ve}\,\|\bar{s}_t(u-v)^m\|_{A_{\*h}}\big)
 \leq 2\ve_0^{-1}\big\{\|\psi^0\|_h+\tfrac32 a_{\max}^2\big(\|\psi_{11}^0\|_h+\ldots+\|\psi_{nn}^0\|_h\big)
\\[1mm]
 +I_{h_t}^{M-1}\big[\|\psi^m\|_h
 +\tfrac32 a_{\max}^2\big(\|\psi_{11}^m\|_h+\ldots+\|\psi_{nn}^m\|_h\big)\big]\big\}
 =\mathcal{O}(|\*h|^4)
\end{gather*}
with $a_{\max}:=\max_{1\leq i\leq n}$.
It suffices to apply on the left the second inequality \eqref{I p ht2Lh}.
\end{proof}

\par In the case where $v^1$ is defined alternatively to \eqref{fds ic2}-\eqref{fds ic2 vkk}, according to the first equality in the energy conservation law valid for any $v^1$, one should guarantee the 4th order approximation of $v^1\approx u(\cdot,h_t)$ in the more complicated form
\[
 \|\bar{\delta}_t(u-v)^m\|_h+\|\bar{s}_t(u-v)^m\|_{A_{\*h}}=\mathcal{O}(|\*h|^4)\ \ \text{for}\ \ m=1
\]
to prove error bound like \eqref{error bound}, cf. \cite{DZ13b}.

\par From bound \eqref{error bound}, we see that it is important to take $\ve>0$ and $\ve_0>0$ in stability conditions \eqref{stab cond 1} and \eqref{stab cond 2} to ensure the error bound in the full mesh energy norm.

\section{\large Some generalizations of the explicit in time compact vector scheme}
\label{some gener}
\setcounter{equation}{0}
\setcounter{lemma}{0}
\setcounter{theorem}{0}
\setcounter{remark}{0}

\medskip\par\noindent \textit{\textbf{\ref{some gener}.1. The case of the non-uniform mesh in space.}}

\smallskip\par\noindent Define the general non-uniform mesh
$\bar{\omega}_{hk}$ in $x_k$ with the nodes $0=x_{k0}<x_{k1}<\ldots<x_{kN_k}=X_k$
and the steps
$h_{kl}=x_{kl}-x_{k(l-1)}$, for $l=1,\ldots,N_k$ and $1\leq k\leq n$.
Let
\begin{gather*}
 \omega_{hk}=\{x_{kl}\}_{l=1}^{N_k-1},\ \
 h_{*kl}=\half(h_{kl}+h_{k(l+1)}),\ \
 h_{k\max}:=\max_{1\leq l\leq N_k} h_{kl},\ \
 h_{\max}=\max_{1\leq k\leq n}h_{k\max}.
\end{gather*}
Let now
$\bar{\omega}_{h}=\bar{\omega}_{h1}\times\ldots\times\bar{\omega}_{hn}$,
$\omega_{h}=\omega_{h1}\times\ldots\times\omega_{hn}$ and $\partial\omega_h=\bar{\omega}_{h}\backslash\omega_{h}$.

\par We generalize the difference operators \eqref{basic oper} in the standard way
\begin{gather*}
 \Lambda_kw_l=\frac{1}{h_{*kl}}\Big(\frac{w_{l+1}-w_l}{h_{k(l+1)}}-\frac{w_l-w_{l-1}}{h_{kl}}\Big),\ \
 s_{kN}w_l
 =\frac{1}{12}(\alpha_{kl}w_{l-1}+10\gamma_{kl}w_l+\beta_{kl}w_{l+1}),
\end{gather*}
on $\omega_{hk}$, where $w_l=w(x_{kl})$ and $1\leq k\leq n$,
with the coefficients
\[
 \alpha_{kl}=2-\frac{h_{k(l+1)}^2}{h_{kl}h_{*kl}},\,\
 \beta_{kl}=2-\frac{h_{kl}^2}{h_{k(l+1)}h_{*kl}},\,\
 \gamma_{kl}=1+\frac{(h_{k(l+1)}-h_{kl})^2}{5h_{kl} h_{k(l+1)}},\,\ \alpha_{kl}+10\gamma_{kl}+\beta_{kl}=12.
\]
Concerning $s_{kN}$ and its approximation error, for example, see \cite{JIS84,Z15,ZK20}.
\par Scheme \eqref{fds 1}-\eqref{fds ic2 vkk} is automatically generalized to the case of the non-uniform spatial mesh $\bar{\omega}_{h}$, with these generalized $\Lambda_k$ and $s_{kN}$.
But now the approximation error orders in \eqref{appr error 1}, \eqref{appr error 2} and \eqref{appr error 3} are reduced down to respectively
$\mathcal{O}(h_t^4+h_t^2h_{\max})$, $\mathcal{O}(h_{k\max}^3)$ and $\mathcal{O}(h_t^4+h_t^2h_{\max})$.
For smoothly varying steps $h_{kl}$ in each spatial direction, here $h_{\max}$ and $h_{k\max}^3$ can be replaced with $h_{\max}^2$ and $h_{k\max}^4$, and the approximation errors increase up to the 4th order.

\par The operator $s_{kN}$ is not self-adjoint in $H_h$ and do not commute with $\Lambda_k$, therefore the above proof of Theorem \ref{theo:stab bound} (and thus Theorem \ref{theo:error bound}) is not applicable any more, and another more cumbersome stability study like in \cite{Z15} is required.
This leads to stronger conditions on $h_t$, and, moreover, there arise some unpleasant pitfalls in this respect \cite{ZChNA21}.

\medskip\par\noindent \textit{\textbf{\ref{some gener}.2. The case of the non-uniform mesh in time.}}

\smallskip\par\noindent Define also the general non-uniform mesh $\overline\omega_{h_t}$ in $t$ with the nodes
$0=t_0<t_1<\ldots<t_M=T$ and the steps $h_{tm}=t_m-t_{m-1}$.
This case is important, in particular, to accomplish a dynamic choice of the time step, and a new moment will arise to cover it.

\par Let $h_{t+,\,m}=h_{t(m+1)}$, $h_{*t}=\half(h_t+h_{t+})$ and $h_{t\max}=\max_{1\leq m\leq M}h_{tm}$.
We define the generalized difference operators in $t$ quite similar to those in space given in the previous item
\begin{gather*}
 \Lambda_ty^m=\frac{1}{h_{*tm}}\Big(\frac{y^{m+1}-y^m}{h_{t(m+1)}}-\frac{y^m-y^{m-1}}{h_{tm}}\Big),\ \
 s_{tN}y^m
 =\frac{1}{12}(\alpha_{tm}y^{m-1}+10\gamma_{tm}y^m+\beta_{tm}y^{m+1}),
\end{gather*}
for $1\leq m\leq M-1$, with the coefficients
\[
 \alpha_{tm}=2-\frac{h_{t(m+1)}^2}{h_{tm}h_{*tm}},\
 \beta_{tm}=2-\frac{h_t^2}{h_{t(m+1)}h_{*tm}},\
 \gamma_{tm}=1+\frac{(h_{t(m+1)}-h_{tm})^2}{5h_{tm} h_{t(m+1)}},\ \alpha_{tm}+10\gamma_{tm}+\beta_{tm}=12.
\]

\par We define the average in $t$ related to the linear finite elements
\begin{gather*}
 (q_ty)^m=\frac{1}{h_{*t,m}}\int_{t_{m-1}}^{t_{m+1}}y(t)e_m(t)\,dt,\,\
 e_m(t)=\tfrac{t-t_{m-1}}{h_{tm}}\ \text{on}\ [t_{m-1},t_m],\
 e_m(t)=\tfrac{t_{m+1}-t}{h_{t(m+1)}}\ \text{on}\ [t_m,t_{m+1}],
\end{gather*}
for $1\leq m\leq M-1$.
The following two expansions follow from Taylor's formula
\begin{gather}
 q_ty=y+\tfrac13(h_{t+}-h_t)\partial_ty+\onetwelve\big(h_{t+}^2-h_{t+}h_t+h_t^2\big)\partial_t^2y+q_t\rho_3(\partial_t^3y),
\label{qt 1}\\[1mm]
 q_ty=s_{tN}y+\tilde{\rho}_{3}(\partial_t^3y),
\label{qt stN 2}\\[1mm]
 |\rho_3(\partial_t^3y)|+
 |\tilde{\rho}_3(\partial_t^3y)|\leq c_1h_{*t}^3\|\partial_t^3y\|_{C[t_{m-1},t_{m+1}]}
\label{rho bounds}
\end{gather}
on $\omega_{h_t}$, for example, see  \cite{ZK20}.

\par Similarly to \cite{ZK20}, we apply the operator $q_t$ to the wave equation \eqref{hyperb2eq}.
Due to the known formula $q_t\partial_t^2u=\Lambda_tu$ and expansion \eqref{qt 1} together with bound \eqref{rho bounds}, we obtain
\begin{gather}
 \Lambda_tu=q_t\partial_t^2u
 =\partial_t^2u
 +\tfrac13(h_{t+}-h_t)\partial_t^3u
 +\onetwelve\big(h_{t+}^2-h_{t+}h_t+h_t^2\big)\partial_t^4u+\mathcal{O}(h_{t\max}^3).
\label{Lambda tu exp}
\end{gather}
The wave equation implies
\[
 \partial_t^3u=\partial_t(Lu+f)=L\partial_tu+\partial_tf,\ \
 \partial_tu=I_t\partial_t^2u+u_1=I_t(Lu+f)+u_1,
\]
where $(I_ta)(t):=\int_0^ta(\theta)\,d\theta$.
Applying these formulas together with \eqref{pt2 hyperb2eq} once again, we pass from expansion \eqref{Lambda tu exp} to the following one
\begin{gather*}
 \Lambda_tu=
 \big[I+\onetwelve\big(h_{t+}^2-h_{t+}h_t+h_t^2\big)L\big](Lu+f)
 +\tfrac13(h_{t+}-h_t)L[I_t(Lu+f)+u_1]+f_{h_t}+\mathcal{O}(h_{t\max}^3)
\end{gather*}
with
\[
 f_{h_t}:=f+\tfrac13(h_{t+}-h_t)\partial_tf+\onetwelve\big(h_{t+}^2-h_{t+}h_t+h_t^2\big)\partial_t^2f.
\]
We replace $f_{h_t}$ with simpler $s_{tN}f$ since $f_{h_t}-s_{tN}f=\mathcal{O}(h_{t\max}^3)$ due to formulas
\eqref{qt 1}-\eqref{rho bounds}.
In addition, the well-known bound for the error of the compound trapezoid formula holds
\[
 \max_{1\leq m\leq M}|(I_ty)(t_m)-I_{h_t}^m\bar{s}_ty|=\mathcal{O}(h_{t\max}^2).
\]
Consequently, using the auxiliary functions \eqref{hyperb2eq ukk}, finally we can write
\begin{gather}
 \Lambda_tu
 =\big[I+\onetwelve\big(h_{t+}^2-h_{t+}h_t+h_t^2\big)L_h\big](u_{11}+\ldots+u_{nn})
\nn\\[1mm]
 +\tfrac13(h_{t+}-h_t)L_h\big[I_{h_t}\bar{s}_t(u_{11}+\ldots+u_{nn}+f)+Lu_1\big]
 +f_{\*h}+\mathcal{O}(h_{t\max}^3+h_{t\max}|h|^2)
\label{main expansion}
\end{gather}
with
\[
 f_{\*h}:=s_{tN}f+\onetwelve\big(h_{t+}^2-h_{t+}h_t+h_t^2\big)L_hf.
\]

\par Omitting the residual term, we pass to the following scheme
\begin{gather}
 \Lambda_tv
 =\big[I+\onetwelve\big(h_{t+}^2-h_{t+}h_t+h_t^2\big)L_h\big](a_1^2v_{11}+\ldots+a_n^2v_{nn})
\nn\\[1mm]
 +\tfrac13(h_{t+}-h_t)L_h\big[I_{h_t}\bar{s}_t(a_1^2v_{11}+\ldots+a_n^2v_{nn}+f)+Lu_1\big]
 +f_{\*h}\ \ \text{on}\ \ \omega_{\*h},
\label{fds 1 nonunif in t}\\[1mm]
 s_{kN}v_{kk}\Lambda_kv\ \ \text{on}\ \ \omega_{\*h},\ \ 1\leq k\leq n,
\label{fds 2 nonunif in t}
\end{gather}
together with equations \eqref{fds ic2}-\eqref{fds ic2 vkk} where the step $h_t=h_{t1}$ has to be taken.
In contrast to the above schemes, this scheme is \textit{non-local in time} but nevertheless its implementation is easy based on the elementary recurrent formula
\[
 I_{h_t}^m\bar{s}_td=I_{h_t}^{m-1}d+h_{tm}\bar{s}_td^m,\ \ m=1,\ldots,M-1,\ \ d:=a_1^2v_{11}+\ldots+a_n^2v_{nn}+f,
\]
though one more array is required to store this function at $m$th time level.

\par For a smoothly varying time step $h_{tm}$, the approximation error order increases up to
$\mathcal{O}(h_{t\max}^4+h_{t\max}^2|h|^2)$ in \eqref{main expansion}.
Moreover, in this case, the non-uniform mesh in space can be taken similarly to the previous Item \ref{some gener}.1 ensuring the approximation error order
$\mathcal{O}(h_{t\max}^4+h_{t\max}^2h_{\max})$ in general or even
$\mathcal{O}(h_{t\max}^4+h_{t\max}^2h_{\max}^2)$ for smoothly varying mesh steps $h_{kl}$ in all spatial directions.

\medskip\par\noindent \textit{\textbf{\ref{some gener}.3. The case of the wave equation with variable coefficients}}

\smallskip\par\noindent We also consider the generalized wave equation
\begin{gather}
 \rho(x)\partial_t^2u(x,t)-Lu(x,t)=f(x,t)\ \ \text{in}\ \ Q_T,
\label{hyperb2eq gen}
\end{gather}
with the variable coefficients $0<\urho\leq\rho(x)$ and $a_1(x)>0,\ldots,a_n(x)>0$ smooth in $\bar{\Omega}$.
Since now
\[
 \rho\partial_t^4u=\partial_t^2(Lu+f)=L\big[\tfrac{1}{\rho}(Lu+f)\big]+\partial_t^2f,
\]
we have (for the uniform meshes in time and space)
\begin{gather*}
 \rho\Lambda_tu=\rho\partial_t^2u+\onetwelve h_t^2\rho\partial_t^4u+\mathcal{O}(h_t^4)
 =\big(\rho I+\onetwelve h_t^2L\big)\big(\tfrac{1}{\rho}Lu\big)+f+\onetwelve h_t^2\big(\partial_t^2f+L\tfrac{f}{\rho}\big)+\mathcal{O}(h_t^4)
\\[1mm]
 =\big(\rho I+\onetwelve h_t^2L_h\big)\big(\tfrac{a_1^2}{\rho}u_{11}+\ldots+\tfrac{a_n^2}{\rho}u_{nn}\big)+f_{\*h}+\mathcal{O}(|\*h|^4),
\end{gather*}
where
\[
 f_{\*h}:=f+\onetwelve h_t^2\big(\Lambda_tf+L_h\tfrac{f}{\rho}\big).
\]
In a standard manner, this leads us to the following generalized equation \eqref{fds 1}:
\begin{gather*}
 \rho\Lambda_tv=\big(\rho I+\tfrac{1}{12}h_t^2L_h\big)\big(\tfrac{a_1^2}{\rho}v_{11}+\ldots+\tfrac{a_n^2}{\rho}v_{nn}\big)+f_{\*h}
 \ \ \text{on}\ \ \omega_{\*h},
\end{gather*}
together with the same equation \eqref{fds 2}.
The boundary conditions \eqref{fds bc} are the same but with the slightly more general functions
\[
 g_k:=\rho\partial_t^2g-\sum_{1\leq l\leq n,\,l\neq k}a_l^2\partial_l^2g-f\ \ \text{for}\ \ x_k=0,X_k\ \ \text{on}\ \ \Gamma_T.
\]

\par It can be also checked that equation \eqref{fds ic2} is generalized as follows
\begin{gather*}
 \rho(\delta_tv)^0
 =\tfrac12 h_t\big (\rho I+\tfrac{1}{12}h_t^2L_h\big)\big(\tfrac{a_1^2}{\rho}v_{11}^0+\ldots+\tfrac{a_n^2}{\rho}v_{nn}^0\big)
 +u_{1\*h}+\tfrac{1}{2}h_tf_{\*h}^0
  \ \ \text{on}\ \ \omega_h,
\end{gather*}
with
\[
 u_{1\*h}:=\big(\rho I+\tfrac16 h_t^2L_h\big)u_1,\ \
 f_{\*h}^0:=f_{dh_t}^{(0)}+\onetwelve h_t^2L_h\tfrac{f^0}{\rho}\ \ \text{on}\ \ \omega_h,
\]
together with the same equation \eqref{fds ic2 vkk}.
These formulas ensure the following estimate for the approximation error
\begin{gather*}
 \rho(\delta_tu)^0-\tfrac12 h_t\big(\rho I+\tfrac{1}{12}h_t^2L_h\big)
 \big(\tfrac{a_1^2}{\rho}u_{11\,0}+\ldots+\tfrac{a_n^2}{\rho}u_{nn\,0}\big)-u_{1\*h}-\tfrac{1}{2}h_tf_{\*h}^0=\mathcal{O}(|\*h|^4)
\ \ \text{on}\ \ \omega_h.
\end{gather*}

\par For the derived scheme, the stability and error bounds need another study.
The scheme can be generalized for the case of non-uniform meshes in space and time similarly as described above.

\bigskip\par\noindent\textbf{Disclosure statement}
\medskip\par\noindent No potential conflict of interests was reported by the author.

\bigskip\par\noindent\textbf{Funding}
\medskip\par\noindent This paper was supported by the Russian Science Foundation under Grant no.~19-11-00169.

\bigskip\par\noindent\textbf{ORCID}
\medskip\par\noindent \textit{Alexander Zlotnik} https://orcid.org/0000-0003-2440-2816

\renewcommand{\refname}{\large\textbf{References}}

\end{document}